\documentclass[11pt,a4]{article}

\usepackage{fullpage}
\marginparwidth 0pt
\oddsidemargin  0pt
\evensidemargin 0pt
\marginparsep 0pt

\topmargin   0pt

\textwidth   6.5in
\textheight  9.2in  

\usepackage[cp1251]{inputenc}
\usepackage[T2A]{fontenc}       
\usepackage[russian,english]{babel}
\usepackage{caption}
\captionsetup{labelsep=period,justification=centerlast,font={small,sf}}

\usepackage{graphicx}
\usepackage{amsmath}
\usepackage{amssymb}
\usepackage{amsthm}
\usepackage{color}
\usepackage{ifthen}
\usepackage{indentfirst}

\clubpenalty=9999
\widowpenalty=9999


\DeclareMathOperator{\arccot}{arccot}

\newlength{\ps}\setlength{\ps}{0.3577777778mm}
\newcommand{\Skip}[1]{}

\newcommand{\up}{\vspace{-.3\baselineskip}}

\interfootnotelinepenalty=10000

\newtheorem{thm}{Theorem}
\newtheorem*{cor}{Corollary}

\theoremstyle{definition}


\newsavebox{\TmpBox}
\newcommand{\tmp}{}
\newlength{\tmplength}                    %

%
\newcommand{\Equa}[2]{%
\begin{equation}#2\label{#1}\end{equation}}
\newcommand{\equa}[1]{\[ #1 \]}

\newcommand{\Diam}[1]{\varnothing_{#1}}
%
%
\newcommand{\abs}[1]{\left\lvert#1\right\rvert}

\newcommand{\sgn}{\mathop\mathrm{sgn}\nolimits}
\newcommand{\Deg}[1]{{\ifmmode{#1}^\circ\else${#1}^\circ$\fi}}
\newcommand{\e}{{\mathrm{e}}}
\newcommand{\Exp}[1]{\e^{#1}}
\newcommand{\nvec}[1]{\mathbf{n}\left(#1\right)}

%
\newcommand{\iu}{{\mathrm{i}}}
%

\newcommand{\Int}[4]{\displaystyle\int\limits_{#1}^{#2}{#3}\,{\mathrm d}#4}

\newcommand{\Arc}[1]{\displaystyle{\buildrel\,\,\frown\over{#1}}}
\newcommand{\Brack}[1]{\left[#1\right]}
\newcommand{\Brace}[1]{\left\{#1\right\}}
\newcommand{\Skobki}[1]{\left(#1\right)}
\renewcommand{\le}{\leqslant}
\renewcommand{\ge}{\geqslant}

\newcommand{\Eqref}[1]{\buildrel\eqref{#1}\over=}
%
%
\newcommand{\So}{\quad\Longrightarrow\quad}

\newcommand{\myfrac}[2]{%
{\ifmmode{}^{#1}\!/_{\!#2}\else${}^{#1}\!/_{\!#2}$\fi}}

%
\newcommand{\Kl}[1]{{\ifmmode{\mathcal{K}_{#1}}\else$\mathcal{K}_{#1}$\fi}}


\newcommand{\tauJ}{\tau_{{}_J}}
\newcommand{\Barc}{\mathcal{B}}

%
%
\def\FigDir{}
\newcommand{\Figref}[1]{\ref{fig:#1}}
 
\newcommand{\RefFig}[2][]{Fig.\,\Figref{#2}{#1}} 
\newcommand{\Infigw}[2]{
\includegraphics[width=#1]{\FigDir#2.eps}}

\newcommand{\Infig}[3]{%
\Infigw{#1}{#2}%
\caption{{\small #3}}\label{fig:#2}%
}

\newcommand{\Pfig}[3]{
\centering\Infig{#1}{#2}{#3}}


\newcommand{\quo}[1]{\glqq#1\grqq}

\begin{document}

\author{Alexey Kurnosenko}
\title{On approximation of planar curves by circular arcs with~length~preservation}

\maketitle{}
\begin{abstract}
The method for approximation of a planar curve by circular arcs with length preservation,
proposed by I.\,Kh.\,Sabitov and A.\,V.\,Slovesnov,
is analyzed.
We extend the applicability of the method,
and consider some corollaries,
not related to the approximation problem.
Inequalities for the length of a convex spiral arc
with prescribed two-point G$^1$ or G$^2$ Hermite data are derived.
We propose a scheme of computer modelling to explore properties of planar curves.
As an example, closeness of ovals is tested,
leading to some conjectures about closeness conditions.
\smallskip\\
\textbf{Keywords:}
spiral curve, biarc, bilens, triarc, curves approximation, length preservation,
cochleoid, cycloidal curves, closed curves.
\end{abstract}

This note further develops the subject of article~\cite{Sabitov},
whose motivation is sufficiently reasoned
by the authors.
In~\cite{Sabitov} a segment of monotone curvature within some given curve
is approximated by a biarc curve, sharing end points and end tangents
with the segment.
In Computer-Aided Design (CAD) applications this is a well-known problem of approximation with
given two-point G$^1$ Hermite data.
Additional condition is required to select a solution among
a family of biarcs.
E.\,g., the condition of minimal curvature jump in the join point
could be imposed.
To the author's knowledge, the condition of lengths preservation,
proposed in~\cite{Sabitov}, is new for this problem.

Here we extend both the variety of curves, to which the method could be applied
(see comments to Theorem\,1),
and fields of its application, not related to the approximation problem
(Sections 3,\,4).
First, we precise some terminology.\up
%
\begin{itemize}
\item
As the customary to CAD,
we designate curves with monotone curvature as {\em spirals}~\cite{PomiMain}.\up
\item
{\em Biarc curve}~\cite{PomiMain,Bolton}
is the curve, composed of two non-closed circular (linear) arcs.\up
\item
{\em Triarc curve} is composed of three circular arcs.
Curvatures $k_1,k_2,k_3$ of a {\em spiral triarc} form a monotone sequence.\up
\end{itemize}

\section{Notation and some properties of biarcs}

Let $[x(s),y(s)]$ be a planar curve, parametrized by the arc length~$s$.
The curvature at a point is defined by derivative
$k(s)=\tau'_s$, where
$\tau(s)=\arg(x'_s+\iu y'_s)$.
Curvature element
$\Kl{0}=\Brace{x_0,y_0,\tau_0,k_0}$
at the point $(x_0,y_0)$ includes the slope~$\tau_0$ of the tangent vector
$\nvec{\tau_0}=(\cos\tau_0,\,\sin\tau_0)$,
and curvature $k_0$, thus defining the directed circle of curvature at this point.

A spiral arc $\Arc{AB}$,
supported by the chord of the length $\abs{AB}=2c$,
is considered in the local coordinate system with point~$A$ 
moved to position $(-c,0)$, and $B$ to $(c,0)$.
Boundary curvature elements are\up
\Equa{K1K2c}{%
  \Kl{1} = \Brace{-c,0,\alpha,k_1},\quad
  \Kl{2} = \Brace{c,0,\beta,k_2};\quad
  \text{denote also}\quad
  \gamma=\frac{\alpha-\beta}2,\quad \omega=\frac{\alpha+\beta}2\,.
}

In
\RefFig[a]{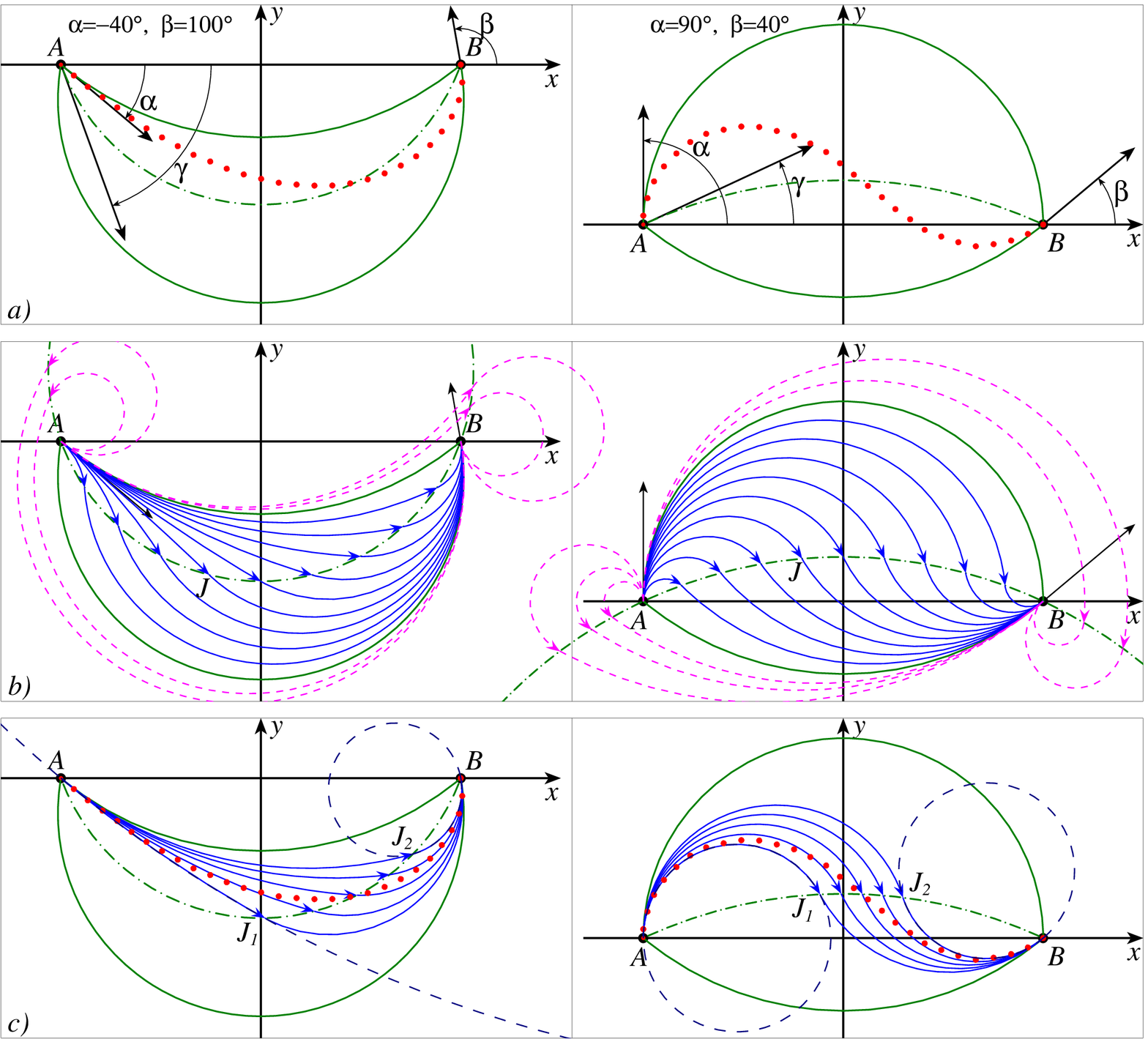}
two spiral arcs with boundary data \eqref{K1K2c} are shown by dotted lines;
the example to the left presents a convex curve, the rightward one shows
a curve with inflection.
Three circular arcs are traced from start point $A$ to end point $B$ of each curve.
One of them shares tangent $\nvec{\alpha}$ with the spiral at~$A$,
the other shares tangent $\nvec{\beta}$ at~$B$.
These two arcs form the {\em lens}.
The third circular arc, shown dotted-dashed, traced from~$A$ at the angle~$\gamma$,
is the {\em bisector of the lens\,}; $\omega$~is the angular half-width of the lens.

In~\cite{InvInv} the inversive invariant $Q$ of a pair of circles was proposed, equal to
$\sin^2\frac{\Psi}{2}$,
where $\Psi$ is intersection angle of two circles
(purely imaginary if $Q<0$).
For pair~\eqref{K1K2c}
\Equa{DefQ1}{%
    Q=(k_1c+\sin\alpha)(k_2c-\sin\beta)+\sin^2\omega.
}
According to \cite{PomiMain} (theorem 2), 
$Q<0$ is the necessary and sufficient condition for existence of a non-biarc spiral
with two-point G$^2$ Hermite data~\eqref{K1K2c}.
If $Q=0$, boundary circles of curvature are tangent, 
and the only possible spiral in this case is biarc.

\subsection{Short spirals}

We call a spiral arc $\Arc{AB}$ {\em short},
if it has no common points with its chord's complement to the infinite straight line
(possibly, intersecting the chord itself).
In~\cite{Sabitov} 
\quo{very short spirals} are considered,
namely, those, one-to-one projectable onto the chord.\up
\begin{itemize}
\item
Existence conditions for a short non-biarc spiral
with two-point G$^2$ Hermite data~\eqref{K1K2c} are: $Q<0$, and\up
\Equa{ABshort}{%
  \begin{array}{llll}
    \text{if~~}k_1<k_2{:}& -\pi<\alpha\le \pi,\quad& -\pi<\beta\le\pi,\quad&\alpha+\beta>0;\\  
    \text{if~~}k_1>k_2{:}& -\pi\le\alpha<\pi,& -\pi\le\beta<\pi,\quad&\alpha+\beta<0
  \end{array}
}
\cite[theorems\,1,\,3]{PomiShort}.
The choice between $\pm\pi$ is imposed by continuity of function $\tau(s)$,
whose values in internal points of a short spiral are in the interval $(-\pi;\pi)$.
E.\,g., spiral with \hbox{$\alpha=\beta=-\pi$} looks like
\setlength{\unitlength}{\ps}
\begin{picture}(40,9)\put(0,0){\Infigw{40\ps, bb=0 3 40 9}{AuxFigPiPi}}\end{picture}.\up
\item
Existence conditions for a short biarc
with two-point G$^2$ Hermite data~\eqref{K1K2c} require $Q=0$ and,
additionally to~\eqref{ABshort}, $\abs{\alpha+\beta}\ne2\pi$. 
This excludes cases $\alpha=\beta=\pm\pi$.\up
\item
Conditions \eqref{ABshort} include Vogt's theorem
{\rmfamily(see \cite{PomiShort}, theorem\,1 and references)}:
\Equa{Vogt}{%
     \sgn(k_2-k_1)=\sgn(\alpha+\beta).\up
}     
The theorem remains valid for long spirals,
if the values of boundary angles are uniquely precised as
$\alpha\to\alpha{+}2m\pi$, $\beta\to\beta{+}2n\pi$,
to become \quo{the angles, bearing their history}.
\item
Short spiral is enclosed into the lens {\rmfamily \cite[theorem\,2]{PomiShort}}.\up
\end{itemize}

\begin{figure}[t]
\centering%
\Pfig{1.\textwidth, bb=0 0 572 480}{Notation}{
(a)~Spiral arc (traced by dotted line), its boundary tangents, and the lens;
the left example is for $0\le k_1<k_2$, and the right one for $k_1<0<k_2$;
(b)~a family of short biarcs, filling the lens; examples of long biarcs are shown dashed;
(c)~boundary circles of curvature of the spiral arcs (dashed),
defining bilens.
}%
\end{figure}

\subsection{%
A family of biarcs with common end tangents}

In
\RefFig[b]{Notation}
the lens is filled with the family of short biarcs,
having common end tangents $\nvec{\alpha}$, $\nvec{\beta}$.
Join points~$J$ are marked by arrows.
Dashed curves show some examples of long biarcs.

Applying homothety with scale factor $c^{-1}$ causes transformations
\equa{
   A\to(-1,0),\quad B\to(1,0),\quad
   \alpha\to\alpha,\quad \beta\to \beta,\quad
   k_1\to a=k_1c,\quad k_2\to b=k_2c.
}
The values $a$ and $b$ become dimensionless curvatures of arcs $AJ$ and $JB$,
normalized to the chord length
$2c=2$.
The condition of tangency of these two arcs, $Q=0$, looks like
\Equa{Hyp-ab}{
    Q(a,b)=0,\quad\text{where}\quad
    Q(a,b) \Eqref{DefQ1} (a+\sin\alpha)(b-\sin\beta)+\sin^2\omega.
}
In the curvatures plane $(a;b)$ 
the curve $Q(a,b)=0$ is the hyperbola, shown in \RefFig{Hyperb}.
Its possible parametrizations $[a(p),\,b(p)]$ yield the parametrization
of the biarcs family under consideration. As in \cite{Bilens}, we accept \up
\Equa{BFamily}{%
   k_{AJ}=\frac{a(p)}c,\quad
   k_{JB}=\frac{b(p)}c,\quad\text{where}\quad
     a(p)=-\sin\alpha-\frac{\sin\omega}p,\quad
     b(p)=\sin\beta +  p\,\sin\omega.
}
Under such parametrization biarcs with $p>0$ are short
\cite{Bilens}, Proof of Property\,3).
Some properties of biarcs
$\Barc(p;\alpha,\beta)$
are mentioned here as functions of family parameter~$p$.

Join points $(X_J, Y_J)$ form a circular arc
(\cite[Property\,6]{Bilens})
\equa{
    X_J(p;\gamma)+\iu Y_J(p;\gamma)=c\,\frac{p^2{-}1 + 2\iu p\sin\gamma}{{p^2+2p\cos\gamma+1}}.
}
Its arc $0\le p\le\infty$
is the lens' bisector, the locus of join points of short biarcs.

The total turning of a short biarc is equal to $\beta-\alpha$
(and $\beta-\alpha\pm2\pi$ for a long one).
It is the sum of turnings $\theta_{1,2}$ of each subarc
\cite[Property\,9]{Bilens}:
\Equa{Rho12}{%
   \begin{array}{l}\theta_1(p)=\tauJ(p)-\alpha,\\ \theta_2(p)=\beta-\tauJ(p),\end{array}
   \quad
      \tauJ(p)={-2}\arctan\dfrac{p\sin\frac{\alpha}2+\sin\frac{\beta}2}{p\cos\frac{\alpha}2+\cos\frac{\beta}2}
              =-\omega+2\arctan\Skobki{\dfrac{1-p}{1+p}\cdot\tan\dfrac{\gamma}{2}}
}
(with no $\pm2\pi$ corrections for short biarcs).
The angle $\tauJ$ is the slope of tangent to a biarc at the join point.
The length $S(p)$ of a biarc is
\Equa{BiarcLen}{%
   S(p)=S_1(p)+S_2(p),
   \text{~~where~~}
   S_1(p)=c\frac{\theta_1(p)}{a(p)},\quad
   S_2(p)=c\frac{\theta_2(p)}{b(p)}.
}

{\em The length $S(p)$ of a short biarc is a strictly monotone function
of parameter~$p$, or constant,
if $\alpha=\beta$} \cite[Property\,11]{Bilens}.
Let us precise the type of monotonicity of $S(p)$ (not specified in \cite{Bilens})
by looking at limit cases
$p\to0$ and $p\to{+\infty}$.
If $p\to0$, the join point $J(p)$ of biarc $AJB$ tends to point~$A$,
arc $AJ$ vanishes,
and the biarc degenerates to the circular arc,
one of lens boundaries, the lower one under conditions of \RefFig{Notation}.
If $p\to\infty$, \ $J(p)\to B$, biarc $AJB$ degenerates to the second lens boundary.
Since\up
\equa{%
    S(0)=2c\,\frac{\beta}{\sin\beta},\qquad
    S(\infty)= 2c\,\frac{\alpha}{\sin\alpha},
}
$S(p)$ strictly decreases when $\abs{\alpha}<\abs{\beta}$.

If these two degenerated biarcs are taken into account,
there exists the unique biarc, passing through any point in plane,
except poles $A$ and $B$
\cite[Properties\,2,10]{Bilens}.

\subsection{Bilens}

Dashed circles in
\RefFig[c]{Notation}
are boundary circles of curvature~\eqref{K1K2c} of the spiral arc.
Biarc $AJ_1B=\Barc(p_1;\alpha,\beta)$ is chosen such that its first arc $AJ_1$ is coincident
with circle~$\Kl{1}$.
Arc $J_2B$ of biarc $AJ_2B=\Barc(p_2;\alpha,\beta)$ is coincident
with circle~$\Kl{2}$.
So, family parameters $p_{1,2}$ of these two biarcs are given 
by equalities  \hbox{$k_1c=a(p_1)$} and \hbox{$k_2c=b(p_2)$}:\up
\Equa{p1p2}{
   p_1=\frac{-\sin\omega}{k_1c+\sin\alpha},\qquad
   p_2=\frac{k_2c-\sin\beta}{\sin\omega}.
}
Normalized curvatures of two additional arcs, $J_1B$ and $AJ_2$, are
$b(p_1)$ and $a(p_2)$.

\begin{itemize}
\item
We call {\em bilens} the region, bounded by biarcs
$\Barc(p_1;\alpha,\beta)$ и $\Barc(p_2;\alpha,\beta)$.\up
\item
Bilens theorem \cite[Theorem\,1]{Bilens}:
{\em all short spirals with boundary curvature elements~\eqref{K1K2c},
are enclosed into the bilens}.\up
\item
{\em Bilens width}, defined as the maximal diameter of inscribed circles,
for convex spirals is given by Theorem~2 in~\cite{Bilens} as
\Equa{Bwidth}{%
   \Diam{}=\dfrac{4c(p_2-p_1)\sin\abs{\omega}}
           {P{+}\sqrt{P^2+4p_2(p_2-p_1)\, a(p_2)\, b(p_1)}},\quad
           \text{where~~}P=1+2p_2\cos\gamma+p_1p_2.
}
\end{itemize}

\RefFig{Hyperb}
illustrates the theorem in the curvature plane $(a;b)$ 
of normalized curvatures~\eqref{BFamily},
end tangents $\nvec{\alpha}$ and $\nvec{\beta}$ being fixed.
Possible values of boundary curvatures $(a,b)$ are defined by inequality
$Q(a,b)\le 0$~\eqref{Hyp-ab}.
For a spiral with increasing curvature this refers to the convex region,
bounded by the left (upper) branch of the hyperbola, located in half-plane
$a<b$.
This is also the branch $0<p<\infty$ of the curve
$[a(p),\,b(p)]$~\eqref{BFamily}.
Its location with respect to asymptotes of the hyperbola yields inequalities
(\cite{PomiMain}, cor.\,2.1)\up
\Equa{Asymp}{%
   a<b \So    a < -\sin\alpha,\quad  b>\sin\beta.
}

Point
$K=(k_1c,k_2c)=(a_1,b_2)$
corresponds to given boundary curvatures
of the spiral arc,
and defines parameters $p_{1,2}$ of the bilens~\eqref{p1p2}.
By projecting point $K$ onto hyperbola, we obtain two points, $J_1=(a_1,b_1)$, and $J_2=(a_2,b_2)$.
Their coordinates are equal to curvatures of arcs $AJ_1,\,J_1B$ and $AJ_2,\,J_2B$,
bounding the bilens.
Points of arc $J_1J_2$ of the hyperbola are images of biarcs, filling the bilens:
$J_1J_2=[a(p),\,b(p)]$, $p_1<p<p_2$. 

\begin{figure}[t]
\centering%
\Pfig{.92\textwidth, bb=0 0 508 200}{Hyperb}
{Hyperbola $Q(a,b)=0$ \ \eqref{Hyp-ab} for two sets $\Brace{\alpha,\beta}$, same as in
\RefFig{Notation}
}%
\end{figure}

In terms of \RefFig{Hyperb}, bilens theorem sounds as follows:
{\em short spiral arcs with boundary curvatures~\eqref{K1K2c},
belonging to curvilinear triangle $KJ_1J_2$,
are inside the bilens}\,.
If $p_1\to 0$
($k_1\to\pm\infty$)
and $p_2\to \infty$ ($k_2\to\mp\infty$),
triangle $KJ_1J_2$ transforms to infinite region to the left of the left branch
of the hyperbola \eqref{Hyp-ab}, or, in the case of decreasing curvature,
to the right of its right branch;
the bilens transforms to the lens.

\subsection{Convex biarcs}

Now consider convex biarcs,
in particular, the limitary case of convexity, namely, biarc
$\Barc(\bar{p};\alpha,\beta)$,
whose one subarc has zero curvature
($a(\bar{p}) = 0$ or $b(\bar{p}) = 0$).

A convex biarc must be short ($p>0$), because a convex curve
cannot have the third common point with the complement of its chord to the infinite straight line
(X-axis).
And curvatures $a,b$ cannot have opposite signs.
Inequality $a(p)\cdot b(p) \ge 0$ at $p>0$ is solved as:\up
\begin{subequations}\label{PSstar}
\Equa{PSstar1}{%
  \aligned
     &\text{if~~}\abs{\alpha}<\abs{\beta}{:}\;&
      \bar{p}&{}\le p,\quad&
      &\text{where~~}\bar{p}=-\dfrac{\sin\omega}{\sin\alpha}>0\quad&
      &\Brack{\,a(\bar{p}) = 0 \text{~~in this case}};
     \\
     &\text{if~~}\abs{\alpha}>\abs{\beta}{:}\;&
     0&{}< p \le \bar{p},\quad&
     &\text{where~~}\bar{p}= -\dfrac{\sin\beta}{\sin\omega}&
      &\Brack{\,b(\bar{p}) = 0\,}.
  \endaligned
}
  
The case  $a(\bar{p})=0$ is shown in \RefFig{S1S2} as biarc
$AJ_0B = \Barc(\bar{p})$.
The length of the straight segment $AJ_0$ can be defined
as the limit of $S_1(p)$ \eqref{BiarcLen} when $a(p)\to 0$ 
($p\to \bar{p}=-\frac{\sin\omega}{\sin\alpha}$):
\equa{
    \abs{AJ_0}=\lim\limits_{a(p)\to 0}  c\frac{\theta_1(p)}{a(p)}
              = -2c\dfrac{\sin\omega}{\sin\gamma}.
}
Turning angle $\theta_2(\bar{p})$ of the second arc, $J_0B$, is
$\beta-\alpha=-2\gamma$,
and its curvature and length are
\equa{
   b(\bar{p})=\sin\beta +  \bar{p}\sin\omega=\sin\beta-\frac{\sin^2\omega}{\sin\alpha}=-\frac{\sin^2\gamma}{\sin\alpha},
   \qquad
   S_2(\bar{p})=c\frac{\theta_2(\bar{p})}{b(\bar{p})}=2c\frac{\gamma\sin\alpha}{\sin^2\gamma}.
}
The total length of biarc $\Barc(\bar{p})$ for this case is given
as the first case in~\eqref{PSstar2}.

For the case $\abs{\alpha}>\abs{\beta}$ we obtain similarly 
$a(\bar{p})=\frac{\sin^2\gamma}{\sin\beta}$,
$S_1(\bar{p})=-2c\frac{\gamma\sin\beta}{\sin^2\gamma}$,
and, for the segment of zero curvature, $S_2(\bar{p})=2c\frac{\sin\omega}{\sin\gamma}$.
So,
\Equa{PSstar2}{%
   S(\bar{p}) = \left\{
   \begin{array}{lll}
      \dfrac{2c}{\sin\gamma}\Skobki{\dfrac{\gamma\sin\alpha}{\sin\gamma}-\sin\omega}\,,\quad&
      \text{if~~~} \abs{\alpha}<\abs{\beta};\quad\\[2ex]
      \dfrac{2c}{\sin\gamma}\Skobki{\sin\omega-\dfrac{\gamma\sin\beta}{\sin\gamma}}\,,\quad&
      \text{if~~~}\abs{\alpha}>\abs{\beta}\quad
   \end{array} 
   \right.\quad
   \Skobki{
   \begin{array}{l}
      \alpha=\omega+\gamma\\
      \beta=\omega-\gamma.
   \end{array} 
   }\,.
}
\end{subequations}

\section{Generalization of theorem 1 \cite[Sabitov, Slovesnov]{Sabitov}}

\begin{subequations}\label{eqTh1}
\begin{thm}\label{ThSabitov}
Let~$\Gamma$ be a convex spiral of length $L$
with increasing curvature~$k(s)$, and
\Equa{eqTh1a}{
  k_1=k(0),\quad k_2=k(L):\qquad 0\le k_1 < k_2 \text{~~or~~} k_1 < k_2\le0.
}  
There exists a unique biarc~$\Gamma_0$, approximation of~$\Gamma$,
with the same length, same end points, and same end tangents.
Curvatures $   q_1$~and~$q_2$ of two arcs of biarc~$\Gamma_0$
obey inequalities
\Equa{eqTh1b}{
    k_1\le q_1<q_2\le k_2,
}
with equalities arising if and only if~$\Gamma$ itself is biarc $(\Gamma_0=\Gamma)$.

If curvature decreases, inequalities \eqref{eqTh1} are replaced by the opposite ones.
\end{thm}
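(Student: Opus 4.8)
The plan is to reduce the statement to the one-parameter family $\Barc(p;\alpha,\beta)$, $p>0$, of short biarcs sharing the end points $A,B$ and the end tangents $\nvec{\alpha},\nvec{\beta}$ of $\Gamma$, and to single out inside it the unique member whose length equals $L$. Since the approximant is to be convex, it must be short (``A convex biarc must be short'', as noted above), so we stay in the range $p>0$. By Property~11 the length $S(p)$ of a short biarc is strictly monotone in $p$; together with continuity this already yields \emph{uniqueness} of a biarc of prescribed length, as soon as existence is secured. For existence it suffices to show that $L$ lies in the range swept by $S$ over the relevant subinterval, and this is exactly where the bilens enters.

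First I would pin down that range. The spiral $\Gamma$ is enclosed in the bilens (Bilens theorem), whose boundary arcs are $\Barc(p_1;\alpha,\beta)$ and $\Barc(p_2;\alpha,\beta)$, with $p_1<p_2$ determined by \eqref{p1p2}. All three curves $\Barc(p_1),\Gamma,\Barc(p_2)$ join $A$ to $B$, are convex, and lie on the same side of the chord $AB$; being nested (the spiral lies in the region between the two bounding biarcs), the convex regions they cut off together with $AB$ are nested as well, and the classical monotonicity of perimeter under inclusion of convex bodies forces $L$ to lie between $S(p_1)$ and $S(p_2)$. Since $S$ is strictly monotone and continuous, there is a unique $p^{\ast}\in[p_1,p_2]$ with $S(p^{\ast})=L$; the corresponding member of the family is the sought biarc $\Gamma_0$. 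This settles existence and uniqueness.

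The curvature estimate \eqref{eqTh1b} then drops out of the parametrization \eqref{BFamily}. The two arc curvatures of $\Gamma_0$ are $q_1=a(p^{\ast})/c$ and $q_2=b(p^{\ast})/c$. Both $a(p)$ and $b(p)$ are strictly increasing in $p$ (their derivatives $\sin\omega/p^{2}$ and $\sin\omega$ share the sign of $\sin\omega$, positive in the increasing-curvature case $\omega\in(0,\pi)$), while the normalisations \eqref{p1p2} read $a(p_1)=k_1c$ and $b(p_2)=k_2c$. Hence $p_1\le p^{\ast}\le p_2$ gives $k_1\le q_1$ and $q_2\le k_2$, and the branch inequality $a(p)<b(p)$ from \eqref{Asymp} gives $q_1<q_2$; this is precisely $k_1\le q_1<q_2\le k_2$.

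Finally, the equality discussion and the one delicate point both reside in the length comparison. If $\Gamma$ is itself a biarc then $Q=0$, the bilens degenerates ($p_1=p_2=p^{\ast}$) and $\Gamma_0=\Gamma$, so \eqref{eqTh1b} holds with equalities throughout. Conversely, when $\Gamma$ is not a biarc one has $Q<0$ and $p_1<p_2$; since $\Gamma$ osculates $\Barc(p_1)$ at $A$ and $\Barc(p_2)$ at $B$ but coincides with neither, the nestings of the second paragraph are \emph{proper}, and strict monotonicity of perimeter under proper inclusion of convex bodies yields strict length inequalities, hence $p^{\ast}\in(p_1,p_2)$ and $k_1<q_1<q_2<k_2$. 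The decreasing-curvature case follows by the reflection $s\mapsto L-s$ (equivalently, interchanging the two end points), which reverses all the inequalities. I expect the genuinely delicate step to be this length comparison: one must check that ``enclosed in the bilens'' indeed places the convex region of $\Gamma$ between those of the two bounding biarcs with the correct orientation, and that the inclusion is proper exactly when $\Gamma$ is not a biarc, so that perimeter monotonicity simultaneously delivers the location $p^{\ast}\in[p_1,p_2]$ and the sharp equality criterion.
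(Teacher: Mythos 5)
Your argument follows essentially the same route as the paper's own proof: enclose $\Gamma$ in the bilens, compare the lengths of the three nested convex curves by monotonicity of perimeter under inclusion (the paper cites Alexandrov's theorem for precisely this step), then use strict monotonicity and continuity of $S(p)$ to locate a unique $p^{\ast}\in(p_1,p_2)$ with $S(p^{\ast})=L_\Gamma$; the curvature bounds \eqref{eqTh1b} are then read off the hyperbola branch exactly as the paper reads them off the arc $J_1J_2$ (your derivation via monotonicity of $a(p)$, $b(p)$ together with $a(p_1)=k_1c$, $b(p_2)=k_2c$ is the same fact, stated slightly more explicitly). The equality discussion ($Q=0$ iff $p_1=p_2$ iff $\Gamma$ is itself a biarc) also matches the paper's trivial/non-trivial case split, and the ``delicate step'' you flag is the same one the paper disposes of by citing Alexandrov.

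The one genuine slip is your final sentence: the reversal $s\mapsto L-s$ does \emph{not} reduce the decreasing case to the increasing one. Reversing the parametrization negates the signed curvature and swaps the endpoints, so a spiral with $k(s)$ decreasing from $k_1$ to $k_2$ goes to $\tilde{k}(s)=-k(L-s)$, which decreases from $-k_2$ to $-k_1$ --- again a decreasing spiral. The correct reduction is an orientation-reversing isometry of the plane (the paper uses symmetry about the X-axis in its Corollary), which negates curvature pointwise, $\tilde{k}(s)=-k(s)$, turning a decreasing spiral with $k_1>k_2$ into an increasing one with boundary curvatures $-k_1<-k_2$; applying the increasing case to it and reflecting back yields the reversed inequalities. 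This is a one-line fix and does not affect the core of your proof.
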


The statement of the theorem includes both the statement of Theorem\,1 from~\cite{Sabitov},
and its strengthening for $k_1=0$ or $k_2=0$  \cite[p.\,5]{Sabitov}.
Additional features are:
\begin{itemize}
\item 
The proof uses results, previously proven for $C^1$-continuous curves.
Therefore we do not require curvature continuity,
accept {\em non}strict monotonicity, in particular, piecewise constancy.\up
\item
Uniqueness of the solution is stated.
\item
Restriction on the total turning,
$\left(\theta=\int_0^L k(s)ds<\frac{\pi}2\right)$,
is weakened: convexity of the curve is sufficient,
which admits the turning angle as close to~$\pm2\pi$ as one pleases.
\end{itemize}

Note that such values do not necessarily yield a bad approximation:
{\em
its precision is the width of the bilens} \eqref{Bwidth},
which could be arbitrarily small
even if $\abs{\theta}\lessapprox 2\pi$.

\begin{proof}[{\textbf{\proofname}}]
The situation is illustrated by \RefFig[c]{Notation}, whose left fragment
shows one of two options of \eqref{eqTh1a}: $0\le k_1<k_2$,
i.\,e. a convex spiral with non-negative curvature.
Spiral~$\Gamma$ is shown by dotted line.

The case when $\Gamma$ is a biarc, and $\Gamma_0=\Gamma$, is trivial;
the uniqueness of the approximation results from monotonicity
of function~$S(p)$~\eqref{BiarcLen}
for short biarcs.

In the non-trivial case we have to prove strict inequalities~\eqref{eqTh1b}.
Consider them in the plane $(a;b)$ of normalized curvatures as
\Equa{eqTh1c}{
    a_1<a_0<b_0<b_2,\text{~~where~~}\
    a_1=k_1c,\quad a_0=q_1c,\quad b_0=q_2c,\quad b_2=k_2c.
}
The left side of \RefFig{Hyperb} corresponds to the left side of \RefFig[c]{Notation}
in the sense of identical boundary angles $\alpha,\beta$,
which, in \RefFig{Notation}, define the lens, 
and in \RefFig{Hyperb} define asymptotes of the hyperbola.
From two inequalities~\eqref{eqTh1a}, the first option
($0\le k_1 < k_2$) is drawn in the figure:
region $KJ_1J_2$ is located in the quadrant $a\ge0$, $b\ge0$ of non-negative curvatures;
and, more precisely, in the octant $a<b$ of increasing curvature.

According to the bilens theorem, curve~$\Gamma$ in enclosed by the bilens.
Involved curves are convex,
curve~$\Gamma$ surrounds one of bilens boundaries, biarc $AJ_2B$,
and the second boundary, biarc $AJ_1B$, surrounds curve~$\Gamma$.
By theorem~3 from
\cite[p.\,411]{Alexandrov},
\equa{
     S(p_1) = L_{AJ_1B}<L_\Gamma < L_{AJ_2B} = S(p_2).
}
Because lengths $S(p)$ of biarcs, filling the bilens,
vary monotonously,
solution~$p_0$ of the equation \hbox{$S(p_0)=L_\Gamma$} exists, is unique, enclosed in the range $p_1<p_0<p_2$,
and yields the sought for biarc. 
The image of this biarc is one of points
$(a_0,\,b_0)=(a(p_0),\,b(p_0))$ of arc $J_1J_2$ of the hyperbola,
completely located in the octant $0\le a<b$,
and inequalities~\eqref{eqTh1c} and~\eqref{eqTh1b} hold.
\end{proof}
\end{subequations}

\renewcommand{\tmp}[1]{#1^{\text{\cite{Sabitov}}}}
In terms of article~\cite{Sabitov} notation $\alpha$ designates the total turning of the spiral arc.
Eq.\,$\tmp{(4.4)}$ for the numerical solution,
rewritten with replacement $\tmp\alpha\to\theta$, looks like
\equa{%
  (R-r)\times\arccos\left(\frac{R-y-r\cos\theta}{R-r}\right)+r\theta=L,
  \text{~~~where~~~}
  R=\frac{x^2+y^2+2r(-x\sin\theta+y\cos\theta)}{2(y-r+r\cos\theta)}.
}
Further replacements bring this equation to the notation and coordinate system
of this article:
\equa{%
  \theta\to\beta-\alpha=\theta_1(p)+\theta_2(p),\quad
  \tmp{R}\to\frac{c}{a(p)},\quad \tmp{r}\to\frac{c}{b(p)},\quad
  \tmp{x}\to2c\cos\alpha,\quad \tmp{y}\to{-2c}\sin\alpha
}
(assuming positive curvature). Arccosine transforms to $\theta_1(p)$,
and Eq.\,$\tmp{(4.4)}$ becomes equivalent to the equation $S(p)=L_\Gamma$.

\section{On lengths of convex spiral arcs}

The converse of theorem~1 is hardly of interest in the view of approximation problem.
Nevertheless, there are situations, when it becomes useful.
E.\,g., theorem\,1 finds for a convex spiral
\begin{subequations}
\Equa{ClassG}{
   \Gamma(c,\alpha,\beta,k_1,k_2),\quad 
    k_1=-\frac1c\Skobki{\sin\alpha+\frac{\sin\omega}{p_1}},\quad
    k_2=\frac1c\Skobki{\sin\beta+p_2\sin\omega},
}
the unique biarc $\Barc(p_0;\alpha,\beta)$ from the subfamily of biarcs
\Equa{FamilyB}{
      \Barc(p;\alpha,\beta),\quad p_1< p < p_2,
}
\end{subequations}
enclosed by the bilens. Monotonicity of $S(p)$ results in
unimprovable
inequalities
$S(p_1)\lessgtr S(p_0)\lessgtr S(p_2)$
for its lengths $S(p_0)=L_\Gamma$.
These inequalities could be extended onto the whole space of curves~$\Gamma$.
Theorem~2, the converse of theorem~1, being valid,
thus associating any biarc~\eqref{FamilyB}
with at least one curve of class~\eqref{ClassG},
they could be extended as {\em unimprovable} inequalities.
Theorem~2 legitimates also the modelling scheme, described in the next section.

\begin{thm}\label{ThInv}
Let $\Gamma_0$ be a convex biarc of length $L_0$, whose two curvatures,
\hbox{$q_1$ and $q_2$,}
are within the range $[k_1;\,k_2]$, such that $0\le k_1<q_1<q_2<k_2$. 
Then a convex spiral $\Gamma$ exists, of the same length~$L_\Gamma=L_0$, 
with the same endpoints and endtangents as~$\Gamma_0$,
and with boundary curvatures
$k(0)=k_1$, $k\Skobki{L_\Gamma}=k_2$.
\end{thm}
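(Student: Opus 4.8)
The plan is to reduce Theorem~\ref{ThInv} to a surjectivity statement for the length functional and then to realise the missing lengths by an explicit family of spiral triarcs. First I would recover from the convex biarc $\Gamma_0$ its chord half-length $c$, its end-tangent slopes $\alpha,\beta$ (hence $\omega,\gamma$), and the family parameter $p_0>0$ with $\Gamma_0=\Barc(p_0;\alpha,\beta)$, so that $a(p_0)=q_1c$, $b(p_0)=q_2c$ in~\eqref{BFamily}. Feeding the target boundary curvatures $k_1,k_2$ into~\eqref{p1p2} produces the bilens parameters $p_1,p_2$. Since $a(p),b(p)$ are increasing for $p>0$, the strict bounds $0\le k_1<q_1<q_2<k_2$ give $0<p_1<p_0<p_2$; moreover $0<p_1<p_2$ forces $Q<0$ in~\eqref{Hyp-ab}, so convex non-biarc spirals with boundary data $\Kl{1},\Kl{2}$ do exist. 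By strict monotonicity of $S(p)$ the target length $L_0=S(p_0)$ then lies strictly between $S(p_1)$ and $S(p_2)$. It therefore suffices to exhibit, for every value in the interval with endpoints $S(p_1)$ and $S(p_2)$, a convex spiral with the prescribed end data and boundary curvatures $k_1,k_2$.

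For this I would use convex spiral triarcs with curvature sequence $(k_1,k_3,k_2)$, $k_1<k_3<k_2$, sharing the endpoints and end tangents of $\Gamma_0$. Being $G^1$ with monotone piecewise-constant curvature, each such triarc is an admissible $\Gamma$ in the sense of Theorem~\ref{ThSabitov}, with boundary curvatures exactly $k_1$ (first arc) and $k_2$ (last arc). The key observation is that this family degenerates to the two bilens boundaries at its extremes: when the last arc shrinks to a point the triarc becomes the biarc $\Barc(p_1;\alpha,\beta)$, which forces $k_3=b(p_1)/c$ and length $S(p_1)$; when the first arc shrinks it becomes $\Barc(p_2;\alpha,\beta)$, forcing $k_3=a(p_2)/c$ and length $S(p_2)$. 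Along the family the length $\theta_1/k_1+\theta_3/k_3+\theta_2/k_2$, with $\theta_1,\theta_3,\theta_2$ the three arc turnings, depends continuously on the family parameter, so the intermediate value theorem yields a member of length exactly $L_0$. Because $L_0$ is strictly interior to the interval, this member is a genuine (non-degenerate) triarc with all three arcs present, convex and with the required boundary data; it is the spiral $\Gamma$ sought.

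The hard part will be to make the triarc family itself rigorous: for each admissible $k_3$ one must solve the three $G^1$ closure conditions (two for the endpoint, one for the end tangent) for the three turnings $(\theta_1,\theta_3,\theta_2)$, and then show that the solution branch issuing from the known biarc $\Barc(p_1;\alpha,\beta)$ can be continued, without folds and without leaving the convex regime, all the way to $\Barc(p_2;\alpha,\beta)$. I would anchor this continuation at the two explicit biarc solutions and run it as an implicit-function/continuation argument in $k_3$, maintaining $\theta_1,\theta_2,\theta_3\ge0$ and $k_1<k_3<k_2$ so that convexity and curvature monotonicity are preserved throughout. Guaranteeing that the branch stays nonnegative in all three turnings over the whole range of $k_3$, and hence connects the two bilens boundaries, is where the real work concentrates.
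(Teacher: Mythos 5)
Your reduction is sound and, up to the crucial step, it coincides with the paper's own proof: you recover $p_0$ from \eqref{BFamily}, get the bilens parameters $p_1,p_2$ from \eqref{p1p2} with $0<p_1<p_0<p_2$, deduce $Q_\Gamma<0$ (your claim that $0<p_1<p_2$ forces this is correct: substituting \eqref{p1p2} into \eqref{DefQ1} gives $Q_\Gamma=\sin^2\omega\,(p_1-p_2)/p_1$), place $L_0=S(p_0)$ strictly between $S(p_1)$ and $S(p_2)$ by monotonicity of $S$, and propose to realize every intermediate length by spiral triarcs with curvature sequence $(k_1,k_3,k_2)$ whose degenerate ends are the bilens boundaries $\Barc(p_1;\alpha,\beta)$ (with $k_3=b(p_1)/c$) and $\Barc(p_2;\alpha,\beta)$ (with $k_3=a(p_2)/c$). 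This is exactly the family the paper uses, with the same two degenerations and the same intermediate-value conclusion.

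The genuine gap is the step you yourself flag as ``where the real work concentrates'': existence and continuity of that triarc family, which you defer to an unspecified implicit-function/continuation argument in $k_3$. As set up, that argument is in trouble before it starts, because nothing guarantees the family is a graph over $k_3$: the middle curvatures at the two ends, $b(p_1)/c$ and $a(p_2)/c$, are not in a fixed order (for a thin bilens, $p_1\to p_2$ gives $b(p_1)>a(p_2)$, since $a(p)<b(p)$ on the branch), so $k_3$ cannot in general be swept monotonically from one end value to the other; for a fixed $k_3$ the three $G^1$ closure equations may admit several solutions with nonnegative turnings, or none, and a continuation branch can fold or leave the admissible region --- precisely the contingencies you list without excluding them. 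The paper's proof contains the one idea your proposal is missing, and it is exactly the device that dissolves these difficulties: a M\"{o}bius map fixing $A$ and $B$ that carries the boundary circles of curvature $\Kl{1},\Kl{2}$ to a \emph{concentric} pair (possible precisely because $Q_\Gamma<0$). In that normalization every circular arc joining the two circles tangentially is a semicircle, and all such semicircles have the same curvature; hence the inscribed triarcs $AM'N'B$ form an explicit one-parameter family indexed by the polar angle of the junction point $M'$ (\RefFig[c]{AT1T2B}), for which the monotonicity of the three curvatures, the two degenerations, and the continuity of length under the inverse map are immediate, after which the intermediate value theorem finishes the proof. To complete your version you would have to replace the $k_3$-continuation by a parametrization along the junction point (the point $M$ on arc $AJ_1$), and prove existence, uniqueness and continuous dependence of the tangent transition arc from $M$ to circle $\Kl{2}$ --- which is what the concentric normalization makes transparent; without it, your proof is incomplete at its central step.
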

\begin{proof}[{\textbf{\proofname}}]
\RefFig[a]{AT1T2B} shows biarc $\Gamma_0$ as curve
$AJB=\Barc(p;\,c,\alpha,\beta)$.
From given curvatures $k_{1,2}$ bilens parameters $p_{1,2}$ are defined \eqref{p1p2},
and bilens is constructed, bounded by biarcs
$AJ_1B=\Barc(p_1;\,c,\alpha,\beta)$ and
$AJ_2B=\Barc(p_2;\,c,\alpha,\beta)$.
For biarc  $\Gamma_0$ inequalities~\eqref{ABshort} hold, and
\equa{%
    Q_0\Eqref{DefQ1}(q_1c+\sin\alpha)(q_2c-\sin\beta)+\sin^2\omega=0.
}
If spiral $\Gamma$ exists, $Q_\Gamma < 0$ should be satisfied, where
\equa{%
    Q_\Gamma= (k_{1}c+\sin\alpha)(k_{2}c-\sin\beta)+\sin^2\omega.
}
In the case of increasing curvature,  $k_1<q_1<q_2<k_2$,
\equa{%
    k_{1}c+\sin\alpha<q_1c+\sin\alpha<0,\quad
    k_{2}c-\sin\beta>q_2c-\sin\beta>0.
}
Comparisons to zero result from \eqref{Asymp},
and yield $Q_\Gamma<0$.
Together with inequalities~\eqref{ABshort}, inherited from biarc $AJB$,
this constitutes the necessary and sufficient condition of the existence of spiral~$\Gamma$
with required boundary data
\cite[theorem\,3]{PomiShort}.

\begin{figure}[t]
\centering%
\Pfig{.96\textwidth}{AT1T2B}{%
(a)~Example of a triarc (curve $AMNB$), inscribed into the bilens: 
arcs $AM$ and $NB$ are partially coincident with bilens boundaries $AJ_1$ and $J_2B$;
(b)~the family of inscribed triarcs;
(c) M\"{o}bius map of configuration~(b), bringing boundary circles of curvature to concentricity.
}%
\end{figure}

To satisfy also the requirement $L_\Gamma=L_0$, we construct a family of triarcs,
inscribed into the bilens
(\RefFig[b]{AT1T2B}).
The construction becomes quite simple, if we use M\"{o}bius map to transform
boundary circles of curvature to a concentric pair\footnotemark.
\footnotetext{%
Two maps exist, preserving points $A$ and $B$ ($z=\pm1$) intact.
One of them transforms increasing curvature $k_1<k_2$ to increasing negative,
$a<b<0$, $\frac{b}{a}=\varkappa^{-}<1$,
the other to increasing positive, $0<a<b$, $\frac{b}{a}=\varkappa^{+}>1$;
and $\varkappa^{\pm}=\Skobki{\sqrt{1-Q}\pm\sqrt{-Q}}^2$.
Maps look like\up
\equa{
  w(z)=\dfrac{z_0+z}{1+z_0 z},\quad\text{where}\quad
    z_0=\dfrac{r_0\Exp{\iu\lambda_0}-1}{r_0\Exp{\iu\lambda_0}+1},\quad
    r_0=\sqrt{\frac{\varkappa}{p_1p_2}},\quad
    \lambda_0=\pi-\gamma+\arctan\Skobki{\frac{\varkappa-1}{\varkappa+1}\cot\omega}. 
}
}
This is possible due to condition $Q<0$ (circles do not intersect).
Constructing in the case of concentricity is simplified by the fact that
any arc, joining two circles, is the semicircle,
and all of them have the same curvature;
the whole construction can be easily described in polar coordinates
with the pole~$O$ in the common center of two circles.

M\"{o}bius map preserves the values of $Q$, $\omega$,
and the very fact (and type) of spirality.
Tangencies, used to construct the bilens, are preserved,
as well as the order of tangency of two curves:
circles of curvature of an arc remain such after transformation.
{\em Non-invariant} are turning angles of curves
(in particular, the property on an arc to be a semicircle),
shortness and convexity of a curve;
but these aspects are not used in the further reasoning.

In \RefFig[c]{AT1T2B}
semicircles $AJ'_2$ and $J'_1B$
are images of arcs $AJ_2$ and $J_1B$;
they join smoothly boundary circles of curvature, now concentric.
Together with arcs $AJ_1'$ and $J_2'B$ they bound the region,
the image of the original bilens.
The family of semicircles $M'N'$, $M'\in AJ'_1$,
fills the region everywhere densely.
When the polar ray $OM'$ sweeps out sector $AOJ'_1$,
ray $ON'$  sweeps out the opposite sector $J_2'OB$.
We obtain the family of inscribed triarcs $AM'N'B$ such that:

$\bullet$~arc $AM'$ is partially coincident with the circle of curvature at the startpoint;

$\bullet$~it is continued by the transition curve, semicircle $M'N'$;

$\bullet$~the third arc, $N'B$, assures the required curvature at the endpoint;

$\bullet$~the three curvatures form monotone sequence.\\
The backward transformation provides the family of triarcs $AMNB$,
everywhere densely inscribed into the bilens.
As $M_1$ moves to $A$,
$N'$ moves to $J'_2$, and $M\to A$, $N\to J_2$:
the first arc of the triarc vanishes,
the triarc degenerates to biarc $AJ_2B$ ($M_2\to J_2$).
Similarly, as $N'\to B$, the third arc of the triarc vanishes,
triarc $AMNB$ degenerates to biarc $AJ_1B$.
{\em The lengths of inscribed triarcs vary continuously in the same range,
wherein the lengths of inscribed biarcs vary continuously and monotonically.}
Therefore the sought for spiral $\Gamma$ of length $L_0$ exists,
at least as a triarc curve.
\end{proof}

\begin{cor}
The length $L_\Gamma$ of a convex spiral arc $\Gamma$ with boundary data~\eqref{K1K2c}
obeys unimprovable inequalities\up
\Equa{Lengths}{
  \aligned
   &\text{if~~}\abs{\alpha}<\abs{\beta}{:}\;&
   S(\infty)< S(p_2) \le{}&L_\Gamma\le S(p_1)\le S(\bar{p})\;&  
   &\Brack{\begin{array}{lcl}
         0\le k_1<k_2, & & \beta> -\alpha>0;\\ 
         0\ge k_1>k_2, & & \beta< -\alpha<0; 
     \end{array}}.
    \\[1.5ex]
    &\text{if~~}\abs{\alpha}>\abs{\beta}{:}\;&
    S(0)<S(p_1)\le{}&L_\Gamma\le S(p_2)\le S(\bar{p})\;&
    &\Brack{\begin{array}{lcl}
         k_1<k_2\le0, & & \alpha> -\beta>0;\\ 
         k_1>k_2\ge0, & & \alpha< -\beta<0; 
     \end{array}}.
  \endaligned
}
Here the inner inequalities account for boundary curvatures;
equalities arise if and only if $\Gamma$ is biarc $(p_1=p_2)$.
The outer inequalities  account for boundary angles only;
the equality case arises if $\Gamma$ is the biarc with straight line segment $(p_1=p_2=\bar{p})$.
\end{cor}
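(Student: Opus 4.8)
The plan is to read the chain~\eqref{Lengths} as three successive comparisons---an inner pair governed by the boundary curvatures, an outer pair governed by the angles alone, and a final sharpness claim---and to invoke the just-proved Theorem~\ref{ThInv} only for the last of these. It suffices to treat one representative, say $\abs{\alpha}<\abs{\beta}$ with $0\le k_1<k_2$; the companion curvature-sign variant follows from the mirror reflection $k\mapsto{-}k$, $\alpha,\beta\mapsto{-}\alpha,{-}\beta$, under which $p_1,p_2,\bar p$ and all the lengths $S(\,\cdot\,)$ are invariant, while the case $\abs{\alpha}>\abs{\beta}$ is obtained by interchanging $p_1\leftrightarrow p_2$ and $S(\infty)\leftrightarrow S(0)$ and replacing the convexity bound $p\ge\bar p$ of~\eqref{PSstar1} by $p\le\bar p$.

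For the inner inequalities $S(p_2)\le L_\Gamma\le S(p_1)$ I would argue exactly as in Theorem~\ref{ThSabitov}. The bilens theorem~\cite[Theorem\,1]{Bilens} nests $\Gamma$ between the boundary biarcs $\Barc(p_1;\alpha,\beta)$ and $\Barc(p_2;\alpha,\beta)$, so Alexandrov's theorem~\cite[p.\,411]{Alexandrov} places $L_\Gamma$ between their lengths; the established monotonicity of $S$---strictly decreasing for $\abs{\alpha}<\abs{\beta}$, with $p_1<p_2$---orders these as $S(p_2)<S(p_1)$ and fixes the displayed direction. For a non-biarc spiral the nesting is strict, so both inequalities are strict; they become equalities together precisely when the bilens degenerates, $p_1=p_2$, i.e.\ when $\Gamma$ is itself a biarc.

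The outer inequalities relax $p_1,p_2$ to their angular extremes and rest on two sign comparisons. Writing $p_1=\tfrac{-\sin\omega}{k_1c+\sin\alpha}$ from~\eqref{p1p2} against $\bar p=\tfrac{-\sin\omega}{\sin\alpha}$ from~\eqref{PSstar1}, the hypothesis $k_1\ge0$ moves the negative denominator toward zero without changing its sign---using $k_1c+\sin\alpha<0$ and $\sin\omega>0$ from~\eqref{Asymp} and the short-spiral conditions~\eqref{ABshort}---so $p_1\ge\bar p$, with equality iff $k_1=0$; monotonicity then yields $S(p_1)\le S(\bar p)$. The lower bound is even simpler: finiteness of $k_2$ keeps $p_2$ finite, and $S(\infty)=2c\,\alpha/\sin\alpha$ is the unattained infimum of the strictly decreasing $S$, whence $S(p_2)>S(\infty)$ unconditionally. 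Combining, the full outer equality $L_\Gamma=S(\bar p)$ forces simultaneously $L_\Gamma=S(p_1)$ and $S(p_1)=S(\bar p)$, i.e.\ $p_1=p_2=\bar p$, the biarc with a straight segment.

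Sharpness is where Theorem~\ref{ThInv} becomes indispensable, and the limiting bookkeeping there is the step I expect to be the main obstacle. For fixed curvatures the inner bounds are attained and filled in between: Theorem~\ref{ThInv} supplies, for every biarc $\Barc(p_0;\alpha,\beta)$ with $p_1<p_0<p_2$, a convex spiral of that very length and boundary data, so $L_\Gamma$ realizes all of $[S(p_2),S(p_1)]$. For the outer bounds one degenerates the data: $S(\bar p)$ is attained exactly, by the straight-segment biarc (a genuine convex spiral with $k_1=0$), while $S(\infty)$ is only approached, as $k_2\to\infty$ drives $p_2\to\infty$. The delicate point is to verify that these degenerate data remain admissible---still satisfying~\eqref{ABshort} and $Q<0$---so that Theorem~\ref{ThInv} keeps applying along the limit; once this is checked, the four variants of~\eqref{Lengths} assemble into the stated unimprovable chains.
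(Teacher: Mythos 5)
Your handling of the inequality chain itself is sound and follows the same route as the paper: the inner bounds are exactly Theorem~\ref{ThSabitov} (bilens nesting, Alexandrov's theorem, monotonicity of $S$), their attainability and the filling of the whole interval come from Theorem~\ref{ThInv}, and the outer bounds follow from the comparisons $p_1\ge\bar{p}$ (via \eqref{p1p2} versus \eqref{PSstar1}) and $p_2<\infty$ together with strict monotonicity of $S$; your equality analysis ($p_1=p_2$ for the inner pair, $p_1=p_2=\bar{p}$ for the outer) is also correct. However, the paper's own proof dismisses all of that as ``an immediate corollary of theorems 1 and 2'' and devotes its entire text to the one thing you never prove: the bracketed correspondences between curvature-sign regimes and angle orderings, namely that $0\le k_1<k_2$ forces $\alpha<0<\beta$ with $\beta>-\alpha$ (from \eqref{Asymp}, $0\le k_1c<-\sin\alpha$, hence $\sin\alpha<0$, combined with Vogt's theorem \eqref{Vogt}, $\alpha+\beta>0$), that $k_1<k_2\le0$ forces $\alpha>-\beta>0$ (from $\sin\beta<k_2c\le0$ and \eqref{Vogt}), and the decreasing-curvature analogues by reflection about the X-axis.

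This omission is a genuine gap rather than bookkeeping, because it is what makes your case split exhaustive. You fix ``one representative, say $\abs{\alpha}<\abs{\beta}$ with $0\le k_1<k_2$'' and generate the other three listed combinations by symmetry; but nothing in your argument rules out a convex spiral with, say, $\abs{\alpha}<\abs{\beta}$ and $k_1<k_2\le0$. For such a hypothetical curve the first chain of \eqref{Lengths} would have to hold, yet your derivation of $S(p_1)\le S(\bar{p})$ collapses there: it uses $k_1\ge0$ to push the negative denominator $k_1c+\sin\alpha$ toward zero, and with $k_1<0$ one gets $p_1<\bar{p}$, hence $S(p_1)>S(\bar{p})$. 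The bracket implications show precisely that this combination is vacuous, i.e.\ each curvature regime lands in exactly one angle regime. They are two-line consequences of \eqref{Asymp} and \eqref{Vogt}/\eqref{ABshort}, both of which you already invoke elsewhere, so the gap is easily filled --- but as written, the part of the statement that the paper actually proves is the part your proposal assumes.
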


\begin{proof}[{\textbf{\proofname}}]
This is an immediate corollary of theorems 1 and 2.
Below we simply comment additional [bracketed] inequalities, accompanying cases
$\abs{\alpha}\gtrless\abs{\beta}$.
\begin{itemize}
\item
If curvature is non-positive and increasing, $k_1<k_2\le0$, we have
$\sin\beta<k_2c\le 0$ \eqref{Asymp},
i.\,e. $\sin\beta<0$,  $\beta<0$.
Together with Vogt's theorem, $\alpha+\beta>0$ \eqref{Vogt}, this yields
$\alpha> -\beta>0$.
\item
In the case $0\le k_1<k_2$ we have $0\le k_1c<-\sin\alpha$, and $\alpha+\beta>0$, i.\,e. $\sin\alpha<0$, 
\hbox{$\alpha<0$}, and therefore \hbox{$\beta>-\alpha>0$}.
\item
The cases of decreasing curvature can be brought to the above ones by the symmetry about X-axis,
which looks like the sign changes for $\alpha,\beta,k_1,k_2$.
\vspace{-1.5\baselineskip}
\end{itemize}
\end{proof}

\begin{figure}[t]
\centering%
\Pfig{1\textwidth, bb=0 0 540 200}{S1S2}{
Illustration to inequalities \eqref{Lengths} 
}%
\end{figure}

\RefFig{S1S2} illustrates inequalities \eqref{Lengths}.
The case $\abs{\alpha}<\abs{\beta}$ with increasing curvature is shown.
Because $\beta=\pi$ is chosen, one of lens boundaries has transformed into X-axis
(with chord $AB$ cut off),
and the horizontal asymptote of hyperbola $Q(a,b)=0$ 
(in the right fragment) became the axis $b=0$.
One of non-convex biarcs is also shown ($AJB$ with $k_1<0<k_2$),
related to the part of the hyperbola in the second quadrant ($a<0$, $b>0$).
Since only convex curves are considered,
we are interested in the subregion of
curvatures $Q(a,b)\le0$, falling into quadrants with $ab\ge 0$,
i.\,e. the first or the thirds ones.
In \RefFig{S1S2} this is shaded subregion $Q(a,b)\le0$, $0\le a<b$ in the first quadrant.

Biarc $AJ_0B$ is $\Barc(\bar{p})$, because $k({AJ_0})=0$.
The subfamily of convex biarcs, $\Barc(p)$, $\bar{p}\le p\le \infty$ \eqref{PSstar1},
is bounded by this biarc and by the circular arc $AMB=\Barc(\infty)$, lens' boundary.
The lengths of these two curves form limits to the length $L_\Gamma$ of an arbitrary
convex spiral with chord $\abs{AB}=2c$ and boundary tangents
$\nvec{\alpha}$ and $\nvec{\beta}$: $S(\bar{p})\ge L_\Gamma>S(\infty)$. 
The second inequality is strict, because curve $\Barc(\infty)$ is no more
a curve with given tangents, although can be as close as one pleases to such one.

If boundary curvatures of a spiral arc are known,
bilens $p\in[\bar{p};\infty)$ (or  $p\in(0;\bar{p}]$) narrows down to $p\in(p_1;p_2)$, 
and inequalities \eqref{Lengths} become correspondingly constricted.
\smallskip

These inequalities, having rather simple geometric construction,
have rather lengthy algebraic form.
Below we put together the sequence of required calculations. 
First, apply the symmetry about one or both coordinate axes,
in order to bring any of four possibilities~\eqref{Lengths}\up
\equa{%
  \setlength{\unitlength}{\ps}
  \begin{picture}(330,28)\put(0,0){\Infigw{330\ps}{Symm4}}\end{picture}
}
to the first one, with increasing non-negative curvature.
Angles $\alpha,\beta$ become $\tau_1$ and $\tau_2$, normalized curvatures
$k_1c,k_2c$ become $a_1$~and~$b_2$:
\equa{
  \aligned
  &\tau_1=-\min\Skobki{\abs{\alpha},\abs{\beta}},\quad&
  &\tau_2=\max\Skobki{\abs{\alpha},\abs{\beta}}\qquad&
  &\Brack{0<-\tau_1<\tau_2\le\pi};\\
  &a_1=c\cdot\min\Skobki{\abs{k_1},\abs{k_2}},\quad&
  &b_2=c\cdot\max\Skobki{\abs{k_1},\abs{k_2}}\quad&
  &\Brack{ 0\le a_1 < b_2};\\
  &b_1=\frac{a_1\sin\tau_2-\sin^2\gamma}{a_1+\sin\tau_1},\quad&
  &a_2=-\frac{b_2\sin\tau_1+\sin^2\gamma}{b_2-\sin\tau_2},\quad&
  &\omega=\frac{\tau_1+\tau_2}2,\quad 
  \gamma=\frac{\tau_1-\tau_2}2.
  \endaligned
}
The third line yields additional curvatures, 
$b_1=b(p_1)$ of arc $J_1B$, and
$a_2=a(p_2)$ of arc $AJ_2$,
expressed through given curvatures $a_1$~and~$b_2$.
Turning angles of arcs, $\theta(AJ_1)=2\xi_1$ and $\theta(J_2B)=2\xi_2$,
are in the range $[0;\,2\pi)$,
and therefore can be exactly got as \ $2\arccot\Skobki{\cot\frac{\theta}{2}}$.
We express them through known curvatures as
$2\xi_1=\theta_1(p(a_1))$ and $2\xi_2=\theta_2(p(b_2))$~\eqref{Rho12}:
\equa{
  \xi_1=\arccot\frac{a_1\cos\omega+\sin\gamma}{-a_1\sin\omega},\qquad
  \xi_2=\arccot\frac{b_2\cos\omega+\sin\gamma}{b_2\sin\omega}.
}
Turning angles of complementary arcs can be defined from the total turning $\tau_2-\tau_1=-2\gamma$.
Inequalities~\eqref{Lengths} are rewritten below in terms of curve length to chord length ratio:
\equa{
   \frac{\tau_1}{\sin\tau_1}  
   < -\frac{\gamma+\xi_2}{a_2}+\frac{\xi_2}{b_2}       
   \le \frac{L_\Gamma}{2c}
   \le \frac{\xi_1}{a_1}-\frac{\gamma+\xi_1}{b_1}       
   \le \frac{\gamma\sin\tau_1-\sin\omega\sin\gamma}{\sin^2\gamma}. 
}
Indeterminacy at $a_1=0$ is evaluated by replacing the last inequality of this chain by equality.

\section{Investigation of properties of planar curves by modelling}

In this section we are going to demonstrate the scheme of curves modelling,
aimed to explore properties of curves under some  particular constraints,
to formulate or verify some hypotheses or preliminary propositions.
That's why the conclusions, suggested by the below examples,
are left as hypotheses, without an attempt to prove them.

The principle of modelling is based on
theorems 1 and 2, allowing us to replace the analysis in infinite-dimensional space of monotonic functions
$k(s)$ ($0\le s\le L$, $k_1\le k(s)\le k_2$) 
by enumeration in the three-parametric family 
$\bar{k}(s;\,C_1,C_2,C_3)$
of piecewise constant functions, namely
\settowidth{\tmplength}{$L_2$}       
\Equa{Model1}{
     \bar{k}(s;\,q_1,q_2,l_1) = 
     \begin{cases}
        q_1, & \text{~~if~~}\makebox[\tmplength][r]{$0$}\le s\le l_1,\\
        q_2, & \text{~~if~~}\makebox[\tmplength][r]{$l_1$}< s\le L
     \end{cases}\qquad
     \left[\begin{array}{l}0<l_1<L,\\
     k_1 < q_1 < q_2 < k_2\end{array}\right]\,.
}
Such enumeration 
can be coded by 3~embedded loops with some reasonable step over each of 3 parameters.

\subsection{Modelling of spirals: location of endpoints}

In~\cite{Sabitov} possible positions of the endpoint of a spiral arc
with prescribed end curvatures $k(0)=k_1$, $k(L)=k_2$, and length~$L$
are of interest.
In \RefFig{Example1} five examples of such arcs are traced.
The left column shows plots of curvature $k(s)$, monotone increasing from $k_1$ to $k_2$.
There are also drawn two-level plots of curvatures $\bar{k}(s)$ of biarcs,
approximating every spiral arc in terms of Theorem\,1.
Biarcs themselves are drawn in the second column,
together with some points of the original (approximated) curve.
Dashed arc is the circle of curvature $\Brace{0,0,0,k_1}$ at the startpoint. 

\begin{figure}[t]
\centering%
\Pfig{1\textwidth, bb=0 0 500 420}{Example1}{
Modelling of location of endpoints of a spiral arcs (case $0\le k_1<k_2$)
}%
\end{figure}

\begin{figure}[t]
\centering%
\Pfig{1\textwidth, bb=0 0 566 330}{Example2}{
Modelling of location of endpoints of a spiral arcs, the case $k_1>k_2\ge 0$.
}%
\end{figure}

Let complex number $Z_1(k;l)$ denotes the endpoint of the circular arc, 
whose curvature and length are $k$ and $l$,
traced from the coordinate origin along the X-axis:
\equa{
      Z_1(k;l)=\Int{0}{l}{\Exp{\iu ks}}{s}=\frac{\iu}{k}\Skobki{1-\Exp{\iu kl}}
       =\frac2k\sin\frac{kl}2 \Exp{\frac{\iu kl}2}\qquad
       \Brack{Z_1(0;l)=\lim\limits_{k\to 0}Z_1(k;l)=l}.
}
Let $Z_2(q_1,l_1;q_2,l_2)$ denotes the endpoint of the biarc,
whose curvatures and lengths are $q_1, l_1$, and  $q_2,l_2$:
\equa{%
      Z_2(q_1,l_1;q_2,l_2)=Z_1(q_1,l_1)+\Exp{\iu q_1 l_1}Z_1(q_2,l_2).
}

For every curve in model~\eqref{Model1} we calculate the endpoint as
$Z_2(q_1,l_1;\,q_2,L-l_1)$.
The result of such procedure is shown in the right side of \RefFig{Example1}
as the pointset, bounded by curves $\Gamma_1$ and~$\Gamma_2$.
As the hypotheses for bounds,
the limitary cases of model~\eqref{Model1} were thought of,
namely:
\begin{itemize}
\item
Vanishing of one of two arcs, i.\,e. either $l_1=0$, or $l_1=L$:
biarc~\eqref{Model1} degenerates into the circular arc
of curvature $q$ and length~$L$.
With varying~$q$, the endpoints of these arcs trace parametric curve
$\Gamma_1(q)$~\eqref{Gamma12}.
\item
Biarcs of the total length $L$ with curvatures,
taking limit values $q_1=k_1$, $q_2=k_2$,
the join point being varied.
Their endpoints trace curve~$\Gamma_2(t)$:
\Equa{Gamma12}{
   \begin{array}{lll}
      \Gamma_1{:}\quad& x(q)+\iu y(q)=Z_1(q;L),\quad& k_1\le q\le k_2;\\
      \Gamma_2{:}\quad& x(t)+\iu y(t)=Z_2(k_1,L{-}t;\,k_2,t),\quad& 0\le t\le L.
   \end{array}
}
\end{itemize}

In \RefFig{Example1} curves $\Gamma_{1,2}$ are extended beyond the parameter ranges,
specified in~\eqref{Gamma12}.
Curve $\Gamma_1$ is known as {\em cochleoid} \cite[p.\,230]{Savelov}.
Its polar equation is $p(\varphi)=L\frac{\sin\varphi}{\varphi}$.

Under conditions of \RefFig{Example1} ($0<k_1<k_2$), curve~$\Gamma_{2}$ is hypocycloid.
To bring it to the canonical position, one should move the coordinate origin to the point $\Skobki{0,k_1^{-1}}$,
and apply rotation by the angle $k_1L-{\pi}/2$.

The output of modelling for the case of curvature, decreasing in the range $k_1>k_2\ge 0$,
is shown in \RefFig{Example2} in the same manner.
Bound~$\Gamma_{2}$ becomes epicycloid, or, if $k_2=0$,
involute of initial circle of curvature.

Some more examples of such pointsets and their bounds are shown in \RefFig{Example12}.
In the leftmost picture bound~$\Gamma_2$ becomes cycloid ($k_1=0$), or straight line segment ($k_2=2k_1$).
The last picture shows additionally subsets of endpoints,
obtained by modelling with fixed value of turning angle
\equa{%
   \theta=\int_0^L k(s)ds=q_1l_1+q_2(L-l_1),
}
which is not affected by the approximation.
Boundaries of these subsets are guessed as curves\up
\equa{
  \aligned
   &\text{} && Z_2(q(t),L{-}t;\,k_2,t),
   &&\text{where}&& q(t)(L-t)+k_2t=\theta,&  0\le {}&t\le\frac{\theta-k_1L}{k_2-k_1},\\
   &\text{and~~} && Z_2(k_1,L{-}t;\,q(t),t),
   &&\text{where}&& k_1(L-t)+q(t)t=\theta,\quad&  \frac{\theta-k_1L}{k_2-k_1}\le {}&t\le L.
  \endaligned
}

\begin{figure}[t]
\Pfig{\textwidth}{Example12}{
Other examples, with curvature decreasing, increasing, positive, negative. 
}%
\end{figure}

\subsection{Modelling of closeness of ovals}

\enlargethispage{1\baselineskip}
{\em Oval} is usually meant a closed convex curve.
Additionally, we assume it to have the minimal number of verticez,
i.\,e., exactly four, in virtue of the Four-vertex theorem.
An example is given by curve $V_0V_1V_2V_3V_0$ in \RefFig[a]{Oval}.

Consider the possibility of modelling curves with only one vertex, namely,
with curvature $k(s)$, increasing from $k(0)=k_1$ to $k(L_1)=k_2$ on an arc of length $L_1$, 
and then decreasing to $k(L)=k_3$ on an arc of length $L_2=L-L_1$.
We need an analogue of model~\eqref{Model1},
as before, piecewise constant, but with $2\times 3=6$ free parameters.
Fixation of total turning~$\theta$ of a curve decrements the number of degrees of freedom
(and embedded loops) to five.
An example of modelling of a set of endpoints for a curve with one vertex and fixed turning
is shown in \RefFig[b]{Oval}.

As a candidate to oval, consider a curve with periodic curvature \hbox{$k(s)\ge 0$},
having four extrema within the period $s_4=L_1{+}L_2{+}L_3{+}L_4$:\up
\equa{%
   k_1=k(0)=k(s_4),\quad k_2=k(s_1),\quad k_3=k(s_2),\quad k_4=k(s_3),\quad
   \text{and}\quad k_1<k_2>k_3<k_4>k_1.
}
Several versions of $k(s)$ with prescribed~$s_i,k_i$ are shown below:\up
\Equa{Candidate}{%
\parbox{400pt}{\Infigw{400pt}{Oval-ks1}}
}
Denote curves, generated by such curvature function, as $V_0V_1V_2V_3V_4$.
The curve is oval if:\up 
\equa{
   \text{(a)~}\textstyle\int\limits_{0}^{s_4}{k(s)}\mathrm{d}s=2\pi;\quad \text{(b)~}V_0=V_4.
}\up\up

Let $\mu=\int\limits_{0}^{s_2}{k(s)}\mathrm{d}s$
and $\nu=\int\limits_{s_1}^{s_3}{k(s)}\mathrm{d}s$ \ be the turning angles between two opposite verticez,
those of minimal curvature ($\mu$, arc $V_0V_1V_2$),  
and those of maximal curvature ($\nu$, arc $V_1V_2V_3$).
For~$\mu$, and for turning $2\pi-\mu$ on the complementary arc $V_2V_3V_0$ we have natural restrictions
(here $L_1=s_1$, $L_2=s_2-s_1$, $L_3=s_3-s_2$, $L_4=s_4-s_3$):\up
\equa{
   k_1L_1+k_3L_2<\mu<k_2(L_1+L_2),\quad k_3L_3+k_1L_4<2\pi-\mu< k_4(L_3+L_4)\So
}
\Equa{RectMuNu}{%
    \max\Skobki{k_1L_1+k_3L_2,\, 2\pi-k_4(L_3+L_4)}
    <\mu<
    \min\Skobki{k_2(L_1+L_2),\, 2\pi-(k_3L_3+k_1L_4)}.
}
Restrictions for $\nu$ can be derived similarly.

\begin{figure}[t]
\centering%
\Pfig{\textwidth, bb=0 0 474 110}{Oval}{
Oval $V_0V_1V_2V_3V_0$, and modelling of possible locations of vertex $V_2$
}%
~~\\~~\\
\Pfig{\textwidth}{Closeness}{
Testing closeness of ovals~\eqref{NumExample}. 
Sets of endpoints (with one example of generating curve for each set),
obtained by varying parameter~$\mu$.
}%
~~\\~~\\
\Pfig{\textwidth}{NonClosed}{
In \eqref{NumExample} the value $k_1=0.25$ is replaced by $k_1=0.42$; sets do not intersect.
}%
\end{figure}

Let us subdivide oval $V_0V_1V_2V_3V_0$ into two curves, each with one vertex.\up
\begin{itemize}
\item
The first curve, $V_0V_1V_2$,
is traced from the coordinate origin along the direction $\nvec{-\frac{\pi}2}$.
Its turning $\mu$ is fixed, the set $A_1B_1C_1D_1A_1$ of possible endpoints~$V_2$
is shown in \RefFig[b]{Oval}.\up
\item
The second curve, $V_0V_3V_2$, is obtained by reversing the curve $V_2V_3V_0$.
Its curvature function is $\widetilde{k}(s)=-k(s_4{-}s)$, $0\le s\le s_4{-}s_2$.
The curve starts  from the coordinate origin along the direction $\nvec{\frac{\pi}2}$.
Its turning angle $\mu-2\pi$ is fixed such as to get smooth closeness,
if endpoints of the two curves come to coincidence.
The set $A_2B_2C_2D_2A_2$ of possible endpoints~$V_2$ of the second curve
is shown in \RefFig[c]{Oval}.
\end{itemize}

Non-empty intersection of two sets (\RefFig[d]{Oval}) means that,
with given~$\mu$, a common point~$V_2$ exists,
and constructing closed curve $V_0V_1V_2V_3V_0$ is possible.
Empty intersection would signify
that {\em with given~$\mu$ closeness is impossible}.
\RefFig{Closeness} with\up 
\Equa{NumExample}{
  k_1= 0.25,\;    L_1= 3.5,\quad 
  k_2= 0.80,\;    L_2= 3.2,\quad 
  k_3= 0.07,\;    L_3= 4.3,\quad 
  k_4= 0.85,\;    L_4= 4.0,
}
shows how the mutual position of two sets changes as~$\mu$ varies.
One of sets vanishes as~$\mu$ reaches limits~\eqref{RectMuNu}.
Two sets approach each other as~$\mu$ increases from minimum,
come to contact at \hbox{$\mu\approx 0.79\pi$}, intersect each other,
and diverge after \hbox{$\mu\approx 1.07\pi$}.

\enlargethispage{1\baselineskip}
A lot of tests discovered two kinds of behavior:\up
\begin{enumerate}
\item
The sets approach each other, intersect between two contact positions,
and separate thereafter.
Tangency of curves $AB$ and $CD$,
demonstrated in \RefFig{Closeness}, is not the only way of contact.\up
\item
Two sets pass around each other, without coming to contact (\RefFig{NonClosed}).
\end{enumerate}

Similar behavior was observed when parameter~$\nu$ was varied.
These observations lead to the following hypotheses about closeness of ovals.\up
\begin{enumerate}
\item
Closeness requires restrictions $\mu^\prime<\mu<\mu^{\prime\prime}$, $\nu^\prime<\nu<\nu^{\prime\prime}$,
essentially more narrow than natural restrictions~\eqref{RectMuNu}.
The values of $\mu^\prime,\mu^{\prime\prime},\nu^\prime,\nu^{\prime\prime}$
are solutions of some equations $F_j(x;L_i,k_i)=0$,
describing contact of boundary curves.\up
\item
The values $L_i,k_i$ exist, such that closeness of curve~\eqref{Candidate}
is impossible for whatever profile~$k(s)$.
We cannot specify whether this fact is reflected as
$\mu^{\prime\prime}\le \mu^\prime$ ($\nu^{\prime\prime}\le \nu^\prime$),
or as the absence of solutions of the above mentioned equations.\up
\item
These restrictions are {\em necessary for closeness} of the curve,
and {\em sufficient for existence} of an oval with given parameters $L_i,k_i$, $i=1,2,3,4$.
\end{enumerate}

With rather simple and clear graphic representation,
we come across bulky formulas,
which seems inevitable, as soon as high order cycloidal curves become involved.
E.\,g., bound $A_1B_1(u)$ is\up
\equa{
   \Exp{-\iu\frac{\pi}2}\cdot Z_2(q_1(u),L_1;\,q_2(u),L_2),\quad
      q_1(u)=\frac{\mu-2uL_2}{L_1+L_2}, \quad
      q_2(u)=\frac{\mu+2uL_1}{L_1+L_2}, \quad
    \begin{cases}k_1\le q_1(u) \le k_2,\\k_2\ge q_2(u)\ge k_3\end{cases}
}
($q_1L_1+q_2L_2=\mu$).
Bound $C_1D_1(v)$ is
\ $\Exp{-\iu\frac{\pi}2} Z_3(k_1,l_1(v);\,k_2,l_2(v);\,k_3,l_3(v))$,
with $Z_3$ defined as
\equa{%
      Z_3(q_1,l_1;\,q_2,l_2;\,q_3,l_3)=Z_1(q_1,l_1)+\Exp{\iu q_1 l_1}Z_1(q_2,l_2)+\Exp{\iu (q_1 l_1+q_2 l_2)}Z_1(q_3,l_3),
}
with $l_{1,2,3}(v)$ and the range for $v$ defined from
\equa{
  \begin{cases}
 l_1(v)+l_2(v)+l_3(v)=L_1+L_2,\\ k_1 l_1(v)+k_2l_2(v)+k_3l_3(v)=\mu,\\ l_3(v)-l_1(v)=2v;
  \end{cases}\qquad
  \begin{cases} 0\le l_1(v) \le L_1,\\ 0\le l_3(v)\le L_2.\end{cases}
}

We also note that these hypotheses got a concrete form
in the particular case\up
\equa{
  L_1=L_2=L_3=L_4=L,\qquad k_1=k_3\ge 0,\qquad k_2=k_4 > k_1.
}\up
Namely, denote\up
\equa{
    \varkappa_1=k_1L,\quad  \varkappa_2=k_2L,\qquad 
     \Phi(x;p,q)=q (x+2p)\sin\frac{x}2  -  x(q-p)\sin\frac{q(2p+x-2\pi)}{2(q-p)}.
}\up\up
The necessary condition of smooth closeness,
$\int\limits_{0}^{4L} k(s)\,\mathrm{d}s = 2\pi$,
requires for an oval
\equa{
    k_1 (L_1+L_4) + k_3 (L_2+L_3)<2\pi< k_2 (L_1+L_2) + k_4 (L_3+L_4)
    \So
    0\le \varkappa_1<\frac{\pi}2<\varkappa_2,
}
and
\eqref{RectMuNu} looks like
\Equa{NatLimits}{%
    \aligned
       \text{if~~}
       \varkappa_1+\varkappa_2\le\pi{:}\quad 
       && 2\pi-2\varkappa_2 < {}&\mu < 2\varkappa_2, 
      \\
       \text{if~~}
      \varkappa_1+\varkappa_2\ge\pi{:}\quad
       &&      2\varkappa_1< {}&\mu < 2\pi-2\varkappa_1  
    \endaligned
}
($\nu$ is in the same range).
We observe in this case the following.
\begin{enumerate}
\item
Intersection of two sets always occurs,
starting with tangency of bounds $A_2B_2$ and $C_1D_1$ at $\mu=\mu^\prime$,
ending with tangency of $A_1B_1$ and $C_2D_2$ at $\mu=\mu^{\prime\prime}$
(and likewise in modelling for~$\nu$).
Under the above parametrizations, $A_1B_1(u)$, $C_1D_1(v)$,
and similar ones for $A_2B_2(u)$, $C_2D_2(v)$,
tangencies occur at $u=0$ and $v=0$.\up
\item
Equations for $\mu',\mu'',\nu',\nu''$ are:
\equa{
     \Phi(\mu'';\varkappa_1,\varkappa_2)=0,\quad
     \Phi(\nu';\varkappa_2,\varkappa_1)=0,\quad
     \mu'=2\pi-\mu'',\quad  \nu''=2\pi-\nu'.
}
Numerical verification shows existence and uniqueness of solutions in ranges \eqref{NatLimits}.
\end{enumerate}


\section{Further work}

In conclusion, we announce some results, already studied by the author,
and concerning the further development of the subject.

Using approximation with spiral triarcs (instead of biarcs) reproduces,
together with boundary tangents, also boundary curvatures,
i.\,e. the two-point G$^2$ Hermite data of the original curve.
Approximation problem with length preservation
always has one or two solutions; the second one appears for arcs with inflection.

If the curve, split into $n$ spiral arcs,
is approximated by triarcs, we get~$3n$ circular arcs instead of~$2n$,
given by biarcs method.
But, for every inner node, the curvature of the left-sided arc is equal
to that of the right-sided arc, both being equal 
to the curvature~$k(s_i)$ of the original curve in the node.
These two arcs can be merged, and the approximating curve
will be composed of~$2n+1$ arcs
(or still of~$2n$, if the original curve was closed).

For approximation with  triarcs, two-level model~\eqref{Model1} is replaced by three-level one:
\equa{%
   \includegraphics{2to3.eps}
}   
The family of approximating curves remains three-parametric,
so, implementation of modelling procedure requires
the same depth of embedded loops as in the biarcs case.

Using triarcs approximation does not require convexity of the original arc.
Therefore, there is no need to include inflection points 
into initial splitting of a curve.
Taking into account applications to conformal maps,
mentioned in~\cite{Sabitov},
this feature resonates with the fact that,
e.\,g., under M\"{o}bius map of both the curve and its approximation,
verticez remain such~\cite[Cor.\,1.1]{InvInv}.
Inflection points are not thus invariant:
they can shift along the mapped curve, appear on, or disappear from it.


\end{document}